\newcommand{\spanop}{\operatorname{span}}
\newtheorem{theorem}{Theorem}%[section]
\newtheorem{proposition}[theorem]{Proposition}
\newtheorem{Remark}[theorem]{Remark}
\declaretheorem[name={Example}  ] {Example}
\newcommand{\beq}       {\begin{equation}}
\newcommand{\eeq}       {\end{equation}}
\newcommand{\bery}      {\begin{array}}
\newcommand{\eery}      {\end{array}}
\newcommand{\berys}     {\begin{array*}}
\newcommand{\eerys}     {\end{array*}}
\newcommand{\beqry}     {\begin{eqnarray}}
\newcommand{\eeqry}     {\end{eqnarray}}
\newcommand{\beqrys}    {\begin{eqnarray*}}
\newcommand{\eeqrys}    {\end{eqnarray*}}
\def\bbr{{\mathbb R}}
\def \R {{\mathbb R}}
\newcommand{\M}{{\mathbf M}}
\newcommand{\be}{\begin{equation}}
\newcommand{\ee}{\end{equation}}
\begin{document}
%%%%%%%%%%%%%%%%%%%%%%%%%%%%%%%%%%%%%%%%%%%%%%%%%%%%%%%
\title{Behavior  of Totally Positive Differential Systems Near a Periodic Solution}

\author{Chengshuai Wu, Lars Gr\"une, Thomas Kriecherbauer, and Michael Margaliot\thanks{
CW and MM are  with the  School of Electrical Engineering, Tel Aviv
University, Israel. LG and TK are  with the
Mathematical Institute, University of Bayreuth, Germany. This research is partially supported by a research grant from the ISF. Correspondence: michaelm@tauex.tau.ac.il}}

\maketitle
 
\begin{abstract}
%%%%
A time-varying nonlinear dynamical system is called a  totally positive differential system~(TPDS)
if its Jacobian admits  a special sign pattern: it is tri-diagonal with positive entries on the super- and sub-diagonals. If the vector field of a~TPDS is $T$-periodic then 
 every bounded trajectory  
converges to a $T$-periodic solution. In particular, when the vector field is time-invariant 
every bounded trajectory of a TPDS
converges to an equlbrium. 
Here, we use the spectral 
 theory of oscillatory matrices to analyze the behavior near a periodic solution of a TPDS.  This yields information on the perturbation directions     that lead to the fastest and slowest convergence to or divergence 
 from the periodic solution. We demonstrate   the 
   theoretical results  using   a model from  systems biology called the ribosome flow model. 
%%%
\end{abstract}

%\begin{IEEEkeywords}
%%%
%  Totally non-negative matrices, oscillatory matrices,
%  cooperative systems, local stability analysis, Floquet theory. 
%%%%
%\end{IEEEkeywords}

%%%%%%%%%%%%%%%%%%%%%%%%%%%%%
\section{Introduction}
%%%%%%%%%%%%%%%%%%%%%%%%%%%%%

Consider the time-varying  nonlinear system:
\be\label{eq:tpds}
\dot x(t)=f(t,x(t)),
\ee
where~$f: \R_+ \times \Omega \to \R^n$ is   continuously differentiable and~$\Omega$ is a convex subset of~$\R^n$. 
For~$t\geq t_0 \geq 0$ and~$a\in\Omega$,
let~$x(t,t_0,x_0)$ denote the solution of~\eqref{eq:tpds}
at time~$t$
with the initial condition~$x(t_0)=x_0$. 
From here on we always take~$t_0=0$ and write~$x(t,x_0)$ for~$x(t,0,x_0)$.  
We assume throughout that   for any~$x_0\in \Omega$ the solution~$x(t,x_0) \in \Omega$  for all~$t\geq 0$.

Let~$J(t,x):=\frac{\partial }{\partial x}f(t,x)$ denote the Jacobian of the vector field~$f$. 
Let~$\M^+  \subset \bbr^{n\times n}$ denote the set of $n\times n$ tri-diagonal 
matrices with positive entries of the super- and sub-diagonals, i.e. the subset of Jacobi matrices. 
%%%
 System~\eqref{eq:tpds} is called a \emph{totally positive differential system}~(TPDS) if~$J(t,x)\in \M^+ $  for all~$(t,x)$.  
In particular,~$J(t,x)$ is a  Metzler matrix (i.e. a matrix whose 
  off-diagonal entries are all non-negative), and also an irreducible matrix. This   implies that~\eqref{eq:tpds} is strongly cooperative~\cite{hlsmith}, i.e. if~$a,b\in\Omega$, with~$a\not =b$, and 
	\[
	a-b\in\bbr^n_+ :=\{x\in\bbr^n: x_i\geq 0,\; i=1,\dots,n  \}
	\]
  then
	\[
	x(t,a)-x(t,b) \in \bbr^n_{++}:= \{x\in\bbr^n: x_i >  0,\; i=1,\dots,n  \}  
	\]
	 for all $t >  0$.
Cooperative systems
play an important role in models from  biology and neuroscience, where it is known that the state-variables represent quantities that can never attain negative values, e.g. concentration of molecules, average number of spikes in a neuron, etc.
Cooperative systems enjoy
  a well-ordered behavior. By Hirsch's quasi-convergence theorem~\cite{hlsmith}, almost every bounded trajectory of a time-invariant (i.e.~$f(t,x)=f(x)$) cooperative system converges to the   set of equilibria.

 TPDSs require a stronger condition on the Jacobian 
 and consequently enjoy stronger properties. 
 Smillie~\cite{smillie} proved that if   the  vector field is time-invariant then
\emph{every} bounded trajectory   converges to  an equilibrium.
Smith~\cite{periodic_tridi_smith} generalized 
this result and showed that if
 the vector field~$f$ of a TPDS  is~$T$-periodic, that is,
 \[
 f(t,z)=f(t+T,z) \text{ for all } t \geq 0,z\in\R^n
 \]
then  {every} bounded trajectory   converges to  a~$T$-periodic solution of the system.  
These results found many 
    applications in fields such as  systems biology and 
		neuroscience~\cite{RFM_stability,chua_roska_1990,Donnell2009120}, 
as well as several  generalizations 
(see, e.g.~\cite{fgwang2013,weiss-cdc57}).

The proofs of Smillie and Smith  are based
on using the number of sign changes in the vector of derivatives~$\dot x(t)$ as an integer-valued Lyapunov function. 
Recently, it was shown that these results are closely related to the sign-variation diminishing property of totally positive matrices~\cite{fulltppaper}.
 %%%%%%%%%%%%%
Recall that a  matrix~$A\in\bbr^{n\times n }$ is called totally non-negative~[TN]
if all its minors are non-negative, and totally positive~[TP] if they are all positive. 
Such matrices have a rich and beautiful structure~\cite{pinkus,total_book}. 
For example,  if~$A\in\bbr^{n\times n}$ is~TP and~$v\in\bbr ^n\setminus\{0\}$, then  
  the number of sign variations in~$Av$ is smaller or equal to  the number of sign variations in~$v$.
 
  Schwarz~\cite{schwarz1970} 
  introduced and analyzed \emph{linear} TPDSs. 
	He showed in particular that  the following two conditions are equivalent: 1)~$A\in \M^+$; 2) the transition matrix~$\exp(At)$ of the linear time-invariant~(LTI) system~$\dot x=Ax $
	is TP for any~$t>0$. 
	Unfortunately, his results were 
	almost forgotten.

  In many dynamical systems  it
  is important to understand
how a small perturbation affects the behavior
near a periodic solution (and, in particular, near an equilibrium). 
In general, this is a difficult problem, as 
there is little explicit information on the periodic solution. 
Here, we use the spectral theory of~TP matrices to analyze   trajectories
of the~$T$-periodic 
   TPDS~\eqref{eq:tpds} in the vicinity of a periodic solution (and  in the special case of a time-invariant vector field, near an equilibrium). 
   Our main result characterizes the sign pattern of  the ``most stable''
and ``most unstable'' perturbation directions near a periodic solution. We also provide an intuitive explanation for the structure of these sign patterns. 
We   demonstrate an application of 
 these theoretical  
 results to an important model from systems biology called the ribosome flow model~(RFM)~\cite{rfm_chap}.

We note that when~$n=2$ and~\eqref{eq:tpds} is a  strictly  cooperative system 
then it is also a TPDS (as a~$2\times 2$ matrix is always tri-diagonal), so our results hold in this special case as well.

The remainder of this note  is organized as follows.
The next section briefly reviews several  known definitions and results that are used
later on. Section~\ref{sec:main} presents our main results, and  describes an application of these results to the~RFM.
The final section concludes. 
We use standard notation. Vectors  [matrices] are denoted by small [capital] letters. 
For~$A\in \R^{n\times m}$, $A^T$ is the transpose of~$A$.

 %We use standard notation.  For a  column vector~$x$, $x^T$
%will denote the transpose of~$x$.  
 
%%%%%%%%%%%%%%%%%%%%
\section{Preliminaries}
%%%%%%%%%%%%%%%%%%%%%%%%%%%%%%%%%
We begin by briefly reviewing
 known results that will be used later on.

\subsection{Sign variation diminishing property}
%%%%%%%%%%%%%%%%%%%%%%%%%%%%%%%%%%%%%%%%%%
An important property of TP matrices is that multiplying a vector by a~TP matrix can never increase  the number of sign variations. 
To explain this, we recall two definitions for the number of sign changes in a   vector. For~$z\in\R^n\setminus\{0\}$, 
let~$s^-(z)$ denote the number of sign changes in the vector~$z$, after deleting all the zero entries (with~$s^-(0)$ defined as zero). For example,~$s^-(\begin{bmatrix}   1& -2 &0&0&3 \end{bmatrix}^T)=2$. 
For~$z\in\R^n$, let~$s^+(z)$ denote the maximal possible number of sign changes in the vector~$z$, after replacing every zero entry by either plus one or minus one. 
For example,~$s^+(\begin{bmatrix}   1& -2 &0&0&3 \end{bmatrix}^T)=4$. Note that these 
definitions imply that
\be\label{eq:smsp}
0\leq s^-(z)\leq s^+(z)\leq n-1,\text{ for any }z\in\bbr^n.
\ee

The next result describes the sign variation diminishing
property of TP matrices. 
\begin{proposition}\label{prop:svdp}\cite{total_book}
Let~$A\in\R^{n \times n}$ be TP. Then
$
s^+(Ax)\leq s^-(x)$  for all $ x\in\R^n\setminus\{0\}$.
%%%%
\end{proposition}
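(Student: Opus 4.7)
The plan is to prove the contrapositive: supposing $s^+(Ax) \geq k+1$ for some integer $k \geq 0$, I aim to show $s^-(x) \geq k+1$. Unpacking the definition of $s^+$, there exist row indices $1 \leq i_1 < \cdots < i_{k+2} \leq n$ and a global sign $\epsilon \in \{-1,+1\}$ such that $\epsilon (-1)^j (Ax)_{i_j} \geq 0$ for every $j = 1, \ldots, k+2$, with the sign changes being realized once zero entries are filled in favorably. Letting $B \in \R^{(k+2) \times n}$ denote the row submatrix of $A$ indexed by $\{i_1,\ldots,i_{k+2}\}$, every minor of $B$ is also a minor of $A$ and hence positive, so $B$ is itself TP, and $Bx$ inherits the alternating sign pattern.

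The crux is then to show that for a $(k+2) \times n$ TP matrix $B$ and any $x \neq 0$, the alternating pattern $\epsilon(-1)^j(Bx)_j \geq 0$ forces $x$ to have at least $k+1$ sign changes. I would approach this via the Cauchy--Binet formula applied to the $(k+2)$-th compound matrix $B^{(k+2)}$, whose single row consists of all $(k+2)\times(k+2)$ minors of $B$, each strictly positive by TP. Choosing any column subset $J = \{j_1 < \cdots < j_{k+2}\}$ yields a square TP submatrix $B[:,J]$ with positive determinant; Cramer's rule applied to this square system expresses the sign-alternating image $Bx$ as a linear combination with alternating-sign coefficients of the entries $x_{j_1},\ldots,x_{j_{k+2}}$, and varying $J$ over the support of $x$ forces $x$ to undergo at least $k+1$ sign changes. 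Since all the involved minors are strictly positive, the only way the alternating sign pattern of $Bx$ can be sustained is for the corresponding entries of $x$ themselves to alternate, yielding $s^-(x) \geq k+1$.

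The main obstacle is the asymmetry between $s^+$ on the left of the inequality and $s^-$ on the right: $s^+$ exploits zero entries of $Ax$ favorably, while $s^-$ treats zero entries of $x$ conservatively. I would resolve this by a density argument: perturb $x$ to a nearby $x_\delta$ chosen so that neither $x_\delta$ nor $A x_\delta$ has any zero entries; on this dense open set, $s^+$ and $s^-$ coincide on both sides, and the inequality reduces to the zero-free setting handled above. Passing to the limit $\delta \to 0$ and invoking that $s^+$ is upper semi-continuous and $s^-$ is lower semi-continuous along such approximating sequences then yields the desired inequality $s^+(Ax) \leq s^-(x)$ in full generality.
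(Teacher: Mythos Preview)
The paper does not supply its own proof of this proposition; it is quoted as a known result with a citation to Fallat--Johnson. So there is no in-paper argument to compare against, and your proposal must be judged on its own.

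Your outline contains two substantive gaps.

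\textbf{The ``crux'' paragraph is not a proof.} You form the $(k+2)\times n$ row-submatrix $B$ and then invoke Cramer's rule on ``this square system'' $B[:,J]$. But $Bx$ is \emph{not} $B[:,J]\,x_J$ unless $x$ is supported on $J$, so Cramer's rule does not apply as stated. The sentence ``varying $J$ over the support of $x$ forces $x$ to undergo at least $k+1$ sign changes'' is an assertion, not an argument: you never explain which determinantal identity links the alternating signs of $Bx$ to sign changes in $x$. The classical route here (Gantmacher--Krein, Karlin, Pinkus) does use positivity of minors, but the mechanism is a specific determinant expansion showing that if $s^-(x)\le k$ then the $(k+2)$-vector $Bx$ \emph{cannot} weakly alternate; your sketch does not reproduce that mechanism.

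\textbf{The density/limit argument fails as written.} You claim that upper semicontinuity of $s^+$ and lower semicontinuity of $s^-$ let you pass to the limit. But upper semicontinuity of $s^+$ gives $\limsup_\delta s^+(Ax_\delta)\le s^+(Ax)$, which is the wrong direction: from $s^+(Ax_\delta)\le s^-(x_\delta)$ you cannot conclude $s^+(Ax)\le s^-(x)$. Concretely, if $Ax$ has a zero entry, a generic perturbation $Ax_\delta$ resolves that zero with a specific sign and may satisfy $s^+(Ax_\delta)<s^+(Ax)$; the inequality for $x_\delta$ then says nothing about $x$. One could try to force $s^-(Ax_\delta)\ge s^+(Ax)$ by perturbing $Ax$ in the ``worst'' direction, but then $x_\delta=x+A^{-1}(\text{perturbation})$ and you lose control of $s^-(x_\delta)$. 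You cannot simultaneously pin down both sides by a single generic perturbation, so the reduction to the zero-free case does not go through as described.
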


\subsection{Oscillatory matrices}
%%%%%%%%%%%%%%%%%%%%%%%%%%%%%%%%%%%%%%%%%%%%%%%%%%%%%%%%%
  A   matrix~$A\in\bbr^{n\times n}$ is called oscillatory if it is~TN and there exists an integer~$k>0$ such that~$A^k$ is~TP~\cite{gk_book}. 
	The smallest such~$k$ is called the exponent of the oscillatory matrix~$A$~\cite{exp_osci}.
	A TN matrix is oscillatory if and only if~(iff) it is non-singular and irreducible~\cite{gk_book}. Oscillatory matrices have special  
spectral properties: their eigenvalues are all real, simple, and positive, and the corresponding eigenvectors have a special sign pattern~\cite{pinkus}.
The next result summarizes this   
spectral structure. 
%%%%%%%%%%%%%%%%%%%%%%%%%%%%%%%%%%%%%%%%%%%%%%%%%%%%%%%%%%%%%%%%%%%%%%%%%%%%%%%
\begin{proposition}\cite{pinkus}\label{prop:specs}
%%%%%%%%%%%%%%%%%%%%%%%%%%%%%
If~$A\in\bbr^{n\times n}$ is oscillatory then all its eigenvalues are real, positive, and simple. 
Denote these eigenvalues as~$\alpha_1>\alpha_2>\dots>\alpha_n>0$, and let~$v^k\in\bbr^{n}\setminus\{0\}$ denote an eigenvector corresponding to~$\alpha_k$. For any~$1\leq i\leq j\leq n$,
let~$V^{ij}:=\spanop \{v^i,v^{i+1},\dots,v^j\}$. Then  
\[
										i-1\leq s^-(z)\leq s^+(z) \leq j-1 \text{ for any } z\in V^{ij}\setminus\{0\}.
\]
%%%%%%%%%%%%%%%%%%%%%
\end{proposition}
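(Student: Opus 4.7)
The plan is to decompose the statement into three layers: (i) the eigenvalue claim (real, positive, simple); (ii) the sign-variation pattern $s^-(v^l)=s^+(v^l)=l-1$ for each single eigenvector; and (iii) the extension from single eigenvectors to arbitrary nonzero elements of the spans $V^{ij}$. For (i), I would invoke that every compound matrix $A^{(p)}$ ($1\le p\le n$) is itself oscillatory (since $(A^{(p)})^k=(A^k)^{(p)}$ and compounds of TP matrices are TP). Perron--Frobenius applied to $A^{(p)}$ then supplies a simple dominant positive real eigenvalue, which must coincide with $\alpha_1\cdots\alpha_p$, and from this the real, positive, and simple nature of each $\alpha_l$ follows together with the strict ordering $\alpha_1>\dots>\alpha_n>0$.

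Step (ii) is, in my view, the main obstacle. The equality $s^-(v^l)=s^+(v^l)$ is immediate from Proposition~\ref{prop:svdp}: applying it to $A^k v^l=\alpha_l^k v^l$ yields $s^+(v^l)=s^+(A^k v^l)\le s^-(v^l)$, and the reverse inequality is already in~\eqref{eq:smsp}. The delicate point is identifying the common value as exactly $l-1$, which cannot be extracted from Proposition~\ref{prop:svdp} alone. I would pin it down via the compound $A^{(l)}$: its leading eigenvector, expanded in the wedge basis $\{v^{i_1}\wedge\cdots\wedge v^{i_l}\}_{i_1<\cdots<i_l}$, yields positivity conditions on all $l\times l$ minors of the matrix $[v^1\;v^2\;\cdots\;v^l]$. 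Inducting on $l$ and unpacking these determinantal sign conditions then forces $s^-(v^l)=l-1$ precisely; this step genuinely uses total positivity beyond the variation-diminishing property.

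Part (iii) is a power-iteration plus semicontinuity argument, and is routine once (ii) is in hand. Fix $z=\sum_{l=i_0}^{j_0}c_l v^l\in V^{ij}\setminus\{0\}$ with $c_{i_0},c_{j_0}\ne 0$ and $i\le i_0\le j_0\le j$. Iterating Proposition~\ref{prop:svdp} on $A^k$ shows that $m\mapsto s^-(A^{mk}z)$ is non-increasing. Normalizing, $\alpha_{i_0}^{-mk}A^{mk}z\to c_{i_0}v^{i_0}$ as $m\to\infty$, and lower semicontinuity of $s^-$ under direction-preserving limits gives $s^-(z)\ge s^-(v^{i_0})=i_0-1\ge i-1$. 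Since $A$ is non-singular, the same scheme applied to $A^{-k}$ shows $s^+(A^{-mk}z)$ is non-decreasing with $\alpha_{j_0}^{mk}A^{-mk}z\to c_{j_0}v^{j_0}$; upper semicontinuity of $s^+$ then delivers $s^+(z)\le s^+(v^{j_0})=j_0-1\le j-1$. The middle inequality $s^-(z)\le s^+(z)$ is already in~\eqref{eq:smsp}, completing the sandwich.
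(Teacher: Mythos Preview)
The paper does not supply a proof of this proposition; it is quoted verbatim as a known result from Pinkus, \emph{Totally Positive Matrices}. So there is no in-paper proof to compare against. That said, your outline is essentially the classical Gantmacher--Krein/Pinkus argument, and it is sound.

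A couple of remarks on accuracy and on where your route deviates from the textbook one. In~(i), you assert that each compound $A^{(p)}$ is ``itself oscillatory''. You do not actually need this (and it is not as immediate as the parenthetical suggests, since TN of $A^{(p)}$ would require all minors of $A^{(p)}$, not just its entries, to be nonnegative). What you use---and what is immediate---is that $A^{(p)}$ is entrywise nonnegative (entries are $p\times p$ minors of the TN matrix $A$) and that $(A^{(p)})^k=(A^k)^{(p)}$ is entrywise positive; hence $A^{(p)}$ is a primitive nonnegative matrix and Perron--Frobenius applies. With that adjustment, your induction yielding $\alpha_1>\cdots>\alpha_n>0$ real and simple is exactly the standard proof.

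Your part~(iii) is a genuine alternative to the textbook route. Pinkus establishes the bounds on $V^{ij}$ directly from the strict sign consistency of the $p\times p$ minors of $[v^1\,\cdots\,v^p]$ (the Perron eigenvector of $A^{(p)}$) together with a determinantal lemma on sign variation in column spans. Your power-iteration/semicontinuity argument avoids that lemma: monotonicity of $m\mapsto s^-(A^{mk}z)$ and of $m\mapsto s^+(A^{-mk}z)$ follows from Proposition~\ref{prop:svdp} (for the latter, set $u=A^{-k}w$ and read $s^+(w)=s^+(A^ku)\le s^-(u)\le s^+(u)$), and the limits pick out $v^{i_0}$ and $v^{j_0}$ via the strict spectral gaps from~(i). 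This is correct and arguably more dynamical in flavor; the price is that you must first nail down~(ii) for the single eigenvectors, whereas the determinantal approach handles~(ii) and~(iii) in one stroke.
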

%%%%%%%%%%%%%%%%%%%%%

\begin{Remark}
Note that this implies in particular that
\be\label{eq:zewy}
	  s^-(v^i) = s^+(v^i) =i-1,\text{ for any } i\in\{1,\dots,n\}.
\ee
In general, the eigenvectors may include zero entries. However, the zeros must be  located such that~\eqref{eq:zewy} holds. 
In particular,~$v^1$ cannot include any zero entries because~$s^+(v^1)=0$,
and~$v^n$ cannot include any zero entries because~$s^-(v^n)=n-1$.
\end{Remark}

 \begin{Example}
%%%%%%%
Let~$A=\begin{bmatrix}  3& 2 &0 \\ 1& 3& 1\\ 0.1&1 &4\end{bmatrix}$. 
All the $1\times 1$ minors of~$A$ (i.e., its entries) are non-negative. The~$2\times 2$ minors 
are~$7, 3, 2, 2.8, 12, 8, 0.7, 3.9, 11$, 
and the single~$3\times 3$ minor is~$\det(A)=25.2$. Thus,~$A$ is~TN. 
Calculating all the  minors of~$A^2$ shows that they are all positive, so~$A^2$ is~TP. Thus,~$A$ is oscillatory. 
Its eigenvalues  are
$\alpha_1=5.03851$, $\alpha_2=  3.55435 $, $\alpha_3=1.40714$, (all numerical values in this paper are to 5-digit accuracy)
with corresponding   eigenvectors:  
$v^1=\begin{bmatrix} 0.55898& 0.569742& 0.602442     \end{bmatrix}^T $,
%%%%%
$v^2=\begin{bmatrix} 
0.746782 & 0.206989& -0.632038\end{bmatrix}^T $,
and $v^3=\begin{bmatrix}   0.765516& -0.609679& 0.205614  \end{bmatrix}^T $.
%%
%%%%%
Note that~$s^-(v^i)=s^+(v^i)=i-1$, $i\in\{1,2,3\}$.
%%%%%
\end{Example}

For our purposes, it is important to know when a tri-diagonal matrix is an oscillatory matrix. 
%%%%%%%%%%%%%%%%%%%%%%%%%%%%%%%
\subsection{Tri-diagonal oscillatory matrices}
%%%%%%%%%%%%%%%%%%
Consider the~$n\times n$ tri-diagonal  matrix
\be\label{eq:defT}
T(a_i,b_i,c_i):=
 \begin{bmatrix}
%%%%%%%%%%%%%%% 
a_1 & b_1 & 0 &\dots& 0 & 0& 0\\
%%%%%%%%%%%%%%%%%%%%%%
c_1 & a_2 & b_2 &\dots& 0 & 0& 0\\
%%%%%%%%%
0&c_2 & a_3  &\dots& 0 & 0& 0\\
\vdots &\vdots & \vdots & \ddots & \vdots & \vdots & \vdots\\
%%%%%%%%%%%%%%%%%%%%%%%%%%%
0 & 0 & 0 &\dots& c_{n-2} & a_{n-1}& b_{n-1}\\
%%%%%%%%%%%%%%%%%%%%%%%%%%%
0 & 0 & 0  &\dots& 0 & c_{n-1}& a_n
\end{bmatrix}.
\ee
Assume that~$b_i,c_i\geq 0$, and that the entries satisfy the \emph{dominance condition}:
\be\label{eq:domcon}
a_i \geq b_i + c_{i-1},\quad i=1,\dots,n,
\ee
with~$b_n:=0$, and~$c_0:=0$. 
Then all the minors of~$T$ are non-negative, i.e.~$T$  is~TN~\cite[Chapter~0]{total_book}.
If, furthermore,~$T$ is non-singular and irreducible 
then it is oscillatory~\cite{gk_book}.

\subsection{Linear TPDSs}
%%%%%%%%%%%%%%%%%%%%%%%%%%%%%%%%%%%%%%%%%%%%%%%%%%%%%%%%
Schwarz~\cite{schwarz1970}
considered the system
\be\label{eq:lintpds}
\dot x(t)=A(t)x(t),
\ee
with~$t\to A(t)$  continuous.
The system is called a linear TPDS
if the corresponding transition matrix~$\Phi(t,t_0)$ (i.e, the matrix such that~$x(t)=\Phi(t,t_0)x(t_0)$ for all~$t\geq t_0\geq0$
and~$x(t_0) \in \R^n$)  is~TP for any~$t>t_0\geq 0$. 
Schwarz showed that this holds iff: 1)~$a_{ij}(t)\equiv 0$ for all~$|i-j|>1$,
2)~$a_{ij}(t)\geq 0$ for all~$|i-j|=1$, and 
3)~any of the functions in~2) does not vanish  identically on any interval of positive length. 
In particular, if~$A(t)\equiv A$ is constant then the system is a TPDS iff~$A\in \M^+$. 
One important property of a linear~TPDS, that follows
 from Prop.~\ref{prop:svdp}, is that for any non-trivial solution~$x(t)$,
\be\label{eq:seinw}
s^+( x(t_2) ) \leq s^- (x(t_1)) , \text{ for all } t_2>t_1\geq 0.  
\ee
Thus, the number of sign variations along a solution of the system is   non-increasing. Furthermore,  it can be shown  that~$s^-(x(t))=s^+(x(t))$
except perhaps at up to~$n-1$ isolated points~\cite{fulltppaper}.

One implication of this is that if~$\gamma(t)$ is a $T$-periodic solution of a linear TPDS  (that is not the trivial solution~$\gamma(t)\equiv 0$) 
then 
\[
s^-(\gamma(t)) =  s^+(\gamma(t))=  s^-(\gamma(0 ))  \text{ for all } t\geq 0.
\]

%%%%%%%%%%%%%%%%%%%%%%%%%%%%%%%%%%%%%%%%%%%%%%

 The next section describes our main results.
 It is useful to begin with  case of a linear TPDS, and then consider the nonlinear case.
 %%%%%%%%%%%%%%%%%%%%%%
\section{Main Results}\label{sec:main}
%%%%%%%%%%%%%%%%%%%%%%%%%%%%%%%%%%%%%%%%%%%%%%%%%%%%%%%%%
 
%%%%%%%%%%%%%%%%%%%%%%%%%%%%%%5
\subsection{  $T$-periodic linear  TPDS}

Consider the linear-time varying~(LTV) system~\eqref{eq:lintpds}
with~$A(t)$   a continuous and~$T$-periodic matrix. 
We also assume that~$A(t)\in\M^+$ for all~$t\in[0,T)$, so~\eqref{eq:lintpds} is a linear~TPDS. 
Then any trajectory with an initial condition in~$\Omega$ converges to a $T$-periodic 
solution of~\eqref{eq:lintpds} (that is not necessarily unique).

The transition matrix satisfies~$\Phi(0)=I$  and  by Floquet theory~\cite{chicone_2006},
$
\Phi(t+T)= \Phi(t) \Phi(T)$,  for all~$t\geq 0$.
 Let~$B:=\Phi(T)$.
 The eigenvalues of~$B$ are called the
 characteristic multipliers of~\eqref{eq:lintpds}. Since~$A(t)\in \M^+$, $\Phi(t)$ is TP for any~$t>0$ and in particular 
 $B$ is~TP. Let
 \be\label{eq:lamp}
 \lambda_1>\lambda_2>\dots>\lambda_n>0,
 \ee
 denote the eigenvalues of~$B$, and let~$v^i\in\R^n$ denote the eigenvector corresponding to~$\lambda_i$. 
 
 Let~$\gamma(t)$ denote a $T$-periodic solution of~\eqref{eq:lintpds} with~$\gamma(0)\not = 0$. 
Then~$\gamma(T)=B\gamma(0)$ and since~$\gamma(T)=\gamma(0)$, this implies that 
there exists~$p\in\{1,\dots,n\}$ such that
 \be\label{eq:lamporef}
 \lambda_1>\lambda_2>\dots>\lambda_p=1>\dots > \lambda_n>0,
 \ee

 Fix a ``perturbation direction''~$w\in\R^n$ and consider the difference
 \be\label{eq:zz}
 z(t):=x(t,\gamma(0)+w)-x(t,\gamma(0) ). 
 \ee
 Then for any integer~$k>0 $,
 \begin{align*}
                        z(kT)&=\Phi(kT)z(0)\\
                        &=B^kz(0) \\
                        &=   B^k w. 
 \end{align*}
 Let~$c_i\in\R$ be such that~$w=\sum_{i=1}^n c_i v^i$. Then 
\be\label{eq:zz2}
z(kT) = \sum_{i=1}^n c_i \lambda_i^k v^i.
\ee
Combining this with~\eqref{eq:lamporef}
implies  the following result. 
\begin{proposition}\label{prop:linearcase}
Consider the linear TPDS~\eqref{eq:lintpds}, and assume that it   admits a~$T$-periodic solution~$\gamma(t)$ with~$\gamma(0)\not = 0$.
There are two possible cases.

\noindent Case 1. If~$\lambda_1>1$ then the periodic solution is not orbitally  stable, 
$v^1$   is the ``worst''  perturbation direction
in the sense that
this perturbation takes the state  away from~$\gamma$ as quickly  as possible. 
Also,~$\lambda_n \leq  1$, and if~$\lambda_n<1$ then~$v^n$  is the ``best''  perturbation direction  in the sense that
this perturbation takes the state  back to~$\gamma$  as  quickly    as possible.

\noindent  Case 2. If~$\lambda_1=1$ then~\eqref{eq:lamporef} implies that
the   periodic solution is stable and orbitally asymptotically stable, 
and that~$v^2$ [$v^n$] is the ``worst'' [``best''] perturbation direction
in the sense that
this perturbation takes the state  back to~$\gamma$   as slowly [quickly] as possible. 
\end{proposition}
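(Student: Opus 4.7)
The plan is to read the conclusion off from the iteration formula~\eqref{eq:zz2}. The key structural input is that $B=\Phi(T)$ is TP for every $T>0$, by Schwarz's characterization of linear TPDSs applied to $A(t)\in\M^+$, hence nonsingular and irreducible, and therefore oscillatory. Proposition~\ref{prop:specs} then ensures that its eigenvalues $\lambda_1>\dots>\lambda_n>0$ are real, positive, and simple, and that $\{v^1,\dots,v^n\}$ is a basis of $\R^n$, so every perturbation $w$ admits the expansion $w=\sum_i c_i v^i$ used in~\eqref{eq:zz2}.

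First I would pin down the role of the index $p$ with $\lambda_p=1$. Because $\gamma(T)=\gamma(0)=B\gamma(0)$ with $\gamma(0)\neq 0$, the value $1$ is an eigenvalue of $B$; simplicity of the spectrum makes $p$ unique and forces $v^p$ to be a nonzero scalar multiple of $\gamma(0)$. This justifies~\eqref{eq:lamporef}.

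Next I would analyze~\eqref{eq:zz2} case by case, using the geometric rate $\limsup_k \|z(kT)\|^{1/k}$ as the natural figure of merit. In Case~1, $\lambda_1>1$ forces $p\geq 2$ and hence $\lambda_n\leq \lambda_p=1$. The choice $w=v^1$ gives $z(kT)=\lambda_1^k v^1$, achieving the largest possible geometric rate $\lambda_1$; any other unit $w$ gives growth rate at most $\lambda_1$, with equality only if the $v^1$-component of $w$ is nonzero. In particular, arbitrarily small perturbations along $v^1$ drive the state unboundedly away from the (bounded) orbit of $\gamma$, which rules out orbital stability. Symmetrically, when $\lambda_n<1$ the choice $w=v^n$ produces the fastest pure decay, at rate $\lambda_n$, for the analogous reason.

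In Case~2, $\lambda_1=1$ forces $\lambda_i<1$ for all $i\geq 2$, so a general perturbation satisfies
\[
z(kT)= c_1 v^1+\sum_{i=2}^n c_i\lambda_i^k v^i \;\longrightarrow\; c_1 v^1,
\]
and since $v^1$ is parallel to $\gamma(0)$ the perturbed trajectory converges to a nearby periodic solution on the line $\R\gamma(0)$; this yields stability and orbital asymptotic stability. Restricting to perturbations with $c_1=0$, which are exactly those that actually return to $\gamma$ itself, the slowest geometric decay rate is $\lambda_2$ (attained by $w=v^2$) and the fastest is $\lambda_n$ (attained by $w=v^n$). The only real obstacle is one of formulation: one must commit to a precise notion of ``worst/best'' perturbation, but once the geometric rate $\limsup_k\|z(kT)\|^{1/k}$ is fixed as the criterion, the strict inequalities in~\eqref{eq:lamporef} make the optimizers unique up to scalar and identify them as the claimed eigenvectors.
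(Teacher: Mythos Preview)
Your proposal is correct and follows essentially the same approach as the paper: the paper derives~\eqref{eq:zz2} from the spectral decomposition of the TP monodromy matrix~$B=\Phi(T)$, observes~\eqref{eq:lamporef} from $B\gamma(0)=\gamma(0)$, and then simply states that the proposition follows by ``combining this with~\eqref{eq:lamporef}.'' You have filled in exactly those details---identifying $v^p$ with a multiple of $\gamma(0)$, introducing the geometric rate $\limsup_k\|z(kT)\|^{1/k}$ as the quantitative criterion, and reading off the extremal directions---so your argument is a fleshed-out version of the paper's implicit one rather than a different route.
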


Note  that for  a TPDS, even if we do not have an explicit expression for~$\gamma(t)$,
we always know that the eigenvalues~$\lambda_i$ of~$B=\Phi(T)$ are real, positive,  and distinct, and also the
special    sign pattern of each of the eigenvectors~$v^i$.

 The next simple example demonstrates Prop.~\ref{prop:linearcase}.
 %%%%%
 
 \begin{Example}
 Consider~\eqref{eq:lintpds}
 with~$n=2$ and
 \[
 A(t)=\begin{bmatrix}
  -2&2+\sin(t)  \\ 2+\sin(t) &-2
 \end{bmatrix}.
 \]
 Note that~$A(t)\in\M^+$ for all~$t\geq 0$, and that~$A(t)$ is~$T$-periodic for~$T=2\pi$.
The transition matrix is 
 \[
 \Phi(t)=\exp(-2t)\begin{bmatrix}
 \cosh(c(t))& \sinh(c(t))\\  \sinh(c(t))&\cosh(c(t))
 \end{bmatrix},
 \]
with~$c(t):= 2 t - \cos(t)+1$.  Note that every entry of~$\Phi(t)$ is positive and that~$\det(\Phi(t))>0$ (i.e. $\Phi(t)$ is TP) for all~$t> 0$. Here,
\[
B=\Phi(2\pi)= \exp(-4\pi) \begin{bmatrix}
 \cosh( 4 \pi) & \sinh( 4 \pi )\\  \sinh( 4 \pi )&\cosh( 4 \pi)  
 \end{bmatrix}.
\]
The eigenvalues of~$B$ are
\[
\lambda_1=1 ,\quad \lambda_2=\exp(-8 \pi),
\]
with corresponding eigenvectors
\[
v^1=\begin{bmatrix}1&1 \end{bmatrix}^T ,\quad v^2=\begin{bmatrix}-1&1 \end{bmatrix}^T .
\]
%%%%
Hence, a~$2\pi$-periodic solution is
%%%%%%%%%%
\begin{align*}
\gamma(t)&:=x(t,v^1) \\
&= \Phi(t)v^1\\
&= \exp(-2t) ( \cosh(c(t))+\sinh(c(t))  )v^1\\
&=\exp( 1-\cos(t) )v^1.
\end{align*}
As expected,~$v^1$ [$v^2$] has zero [one] sign changes.
To demonstrate~\eqref{eq:zz2} in this case, 
define~$z$ as in~\eqref{eq:zz}.  
Any~$w\in\R^2$ can be written as
$
w=\frac{w_1+w_2}{2} v^1+ \frac{w_2-w_1}{2} v^2, 
$
so
\begin{align*}
z(kT)&= B^k w\\
&= \frac{w_1+w_2}{2}    v^1 +  \frac{w_2-w_1}{2}\exp(-8k \pi )  v^2.    
\end{align*}
 \end{Example}

\subsection{$T$-periodic nonlinear TPDS}
%%%%%%%%%%%%%%%%%%%%%%%%%%%%%%%%%%%%%%%%%%

Consider the nonlinear system~\eqref{eq:tpds} with a~$T$-periodic vector field.
Assume that its trajectories evolve on  the state-space~$\Omega \subseteq \bbr ^n $, and that
\be \label{eq:oft}
J(t,z)\in \M^+ \text{ for all } t\in[0,T), z \in \Omega,
\ee
that is,~\eqref{eq:tpds}  is a TPDS.
 
 We first consider the case where~$f$ is time-invariant (and hence~$T$-periodic for any~$T$). 
The next result describes 
 the special spectral structure of~$J(z)$ at any point~$z\in\Omega$.

%%%%%%%%%%%%%%%%%%%%%%%%%%%%%%%%
\begin{proposition}
\label{thm:jspectpds}
Fix~$z\in \Omega$.  
The eigenvalues of~$J(z)$ are real and simple. Denote these eigenvalues 
by
\[
\alpha_1(z)>\alpha_2(z)>\dots>\alpha_n(z) ,
\]
 and let~$v^k(z)\in\bbr ^n $ denote the eigenvector corresponding to~$\alpha_k(z)$. For any~$1\leq i\leq j\leq n$, let~$V^{ij}(z):=\spanop \{v^i(z),v^{i+1}(z),\dots,v^j(z)\}$.
 Then 
\be\label{eq:speiv}
										i-1\leq s^-(y)\leq s^+(y) \leq j-1,\quad \text{for any } y\in V^{ij}(z)\setminus\{0\}.
\ee
%%%%
\end{proposition}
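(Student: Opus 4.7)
The plan is to reduce this proposition to Proposition~\ref{prop:specs} by a diagonal shift argument. The issue is that $J(z) \in \M^+$ is a Jacobi matrix with positive sub- and super-diagonal entries, but its diagonal entries can have arbitrary sign, so $J(z)$ need not be~TN and in particular need not be oscillatory. The remedy is to add $\mu I$ for a sufficiently large $\mu > 0$, observe that the shifted matrix is oscillatory, and then transfer the spectral conclusions back to $J(z)$, using the fact that shifting by $\mu I$ preserves eigenvectors and shifts eigenvalues by $\mu$.

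More precisely, write $J(z) = T(a_i, b_i, c_i)$ as in~\eqref{eq:defT}, with $b_i > 0$ and $c_i > 0$ since $J(z) \in \M^+$. Choose $\mu > 0$ large enough that
\[
a_i + \mu > b_i + c_{i-1}, \qquad i = 1, \dots, n,
\]
with the usual convention $b_n = c_0 = 0$. Set $A := J(z) + \mu I$. By the result recalled in the subsection on tri-diagonal oscillatory matrices, the dominance condition~\eqref{eq:domcon} makes $A$ a TN matrix. Moreover, $A$ is irreducible because its off-diagonal entries on the sub- and super-diagonals are strictly positive, and $A$ is non-singular because the strict inequality above yields strict diagonal dominance. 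Hence $A$ is oscillatory.

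Applying Proposition~\ref{prop:specs} to $A$ yields that the eigenvalues $\beta_1 > \beta_2 > \dots > \beta_n > 0$ of $A$ are real, simple, and positive, with eigenvectors $v^k(z)$ satisfying the sign-variation bounds~\eqref{eq:speiv} on every non-trivial element of $\spanop\{v^i(z), \dots, v^j(z)\}$. The spectrum of $J(z) = A - \mu I$ is then $\alpha_k(z) := \beta_k - \mu$, which is still real and simple and satisfies $\alpha_1(z) > \alpha_2(z) > \dots > \alpha_n(z)$. Since $J(z)$ and $A$ share the same eigenvectors, the sign-pattern statement for the $v^k(z)$ is inherited directly from Proposition~\ref{prop:specs}.

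There is no serious obstacle here; the only thing to be careful about is checking that $\mu$ can be chosen so that $A$ is simultaneously TN, irreducible, and non-singular, so that the characterization of oscillatory matrices as non-singular irreducible TN matrices applies. Strict diagonal dominance takes care of non-singularity in one step and avoids any perturbation or limiting argument.
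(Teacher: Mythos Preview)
Your proof is correct and follows essentially the same approach as the paper: shift $J(z)$ by a large multiple of the identity so that the resulting tri-diagonal matrix satisfies the dominance condition, is irreducible and non-singular, hence oscillatory, and then apply Proposition~\ref{prop:specs}. The only difference is cosmetic---you spell out strict diagonal dominance as the reason for non-singularity, whereas the paper simply asserts that $s$ can be chosen to make $sI+J(z)$ non-singular.
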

%%%%%%%%%%%%%%%%%%%%%%%
\begin{proof}
%%%%
Fix~$z\in  \Omega$. Then~$J(z)$ is a Jacobi matrix.
%%%%%%%%
Pick~$s>0$ large enough so   that~$sI+J(z) $ satisfies the dominance condition~\eqref{eq:domcon}, and is nonsingular. 
 Then~$sI+ J(z)$ 
is~TN, nonsingular and irreducible, so it is oscillatory. 
Applying Prop.~\ref{prop:specs}  completes the proof.
%%%%
\end{proof}

In particular, if~$e$ is an equilibrium then the  set of eigenvalues and eigenvectors of~$J(e)$
provides a rather complete picture of the dynamical behavior near~$e$. 
Indeed, the  Hartman–Grobman theorem~\cite{chicone_2006} asserts that 
  the phase portrait near~$e$ (assuming that~$J(e)$
	has no eigenvalues with a zero real part) 
 is the same as the phase portrait of the LTI  system~ $\dot y=J(e)y$, up to a continuous change of coordinates~$x=p(y)$, with $p:\bbr^n\to\bbr^n$ satisfying~$p(0)=e$.
  
Suppose for example that all the eigenvalues are negative. Then the eigenvector~$v^1(e)$ [$v^n(e)$]
corresponds to the ``slowest'' [``fastest'']
 eigenvalue~$\alpha_1(e)$ [$\alpha_n(e)$]. 
 A perturbation of the steady-state in the form~$e\to p(\varepsilon v^1(e))$ [$e\to p(\varepsilon v^n(e))$], with~$|\varepsilon|$ small,
will be ``compensated'' at the slowest [fastest] possible rate.

\begin{Example}
%%%
Consider the system:
\begin{align}\label{eq:neuro}
\dot x_1&=-2x_1+\tanh(x_1)+2\tanh(x_2),\nonumber \\
\dot x_2&=- x_2+ (1/2) \tanh(x_1)+ \tanh(x_2).
\end{align}
Such  systems appear in models of neural networks with~$\tanh(\cdot)$ as the  activation function. 
It is straightforward to verify  
that there are three equilibrium points in~$\R^2$:
$e^1=\begin{bmatrix} 0&0 \end{bmatrix}^T$,  
$e^2= \begin{bmatrix} 1.28784&1.28784 \end{bmatrix}^T$,
and~$e^3= -e^2$.
The Jacobian  of~\eqref{eq:neuro} is
\begin{align}\label{eq:jacon}
J(x)=\begin{bmatrix} 
					-2+\cosh^{-2}(x_1) & 2\cosh^{-2}(x_2)\\
			(1/2)	\cosh^{-2}(x_1)& -1+  \cosh^{-2}(x_2) 
\end{bmatrix},
\end{align}
so~$J(x) \in \M^+$ for all~$x\in\R^2$. 
The eigenvalues of~$J(e^1)$ 
are
\[
  (\sqrt{5}-1)/2, \quad  (-\sqrt{5}-1)/2,
\]
with corresponding eigenvectors
\[ 
\begin{bmatrix} \sqrt{5}-1 & 1  \end{bmatrix}^T, \quad 
 \begin{bmatrix} -\sqrt{5}-1 & 1 \end{bmatrix}^T.
\] 
The eigenvalues of~$J(e^2)$ (and of~$J(e^3)$) 
are
\[
-0.672232,\quad   -1.80202,   
\]
with corresponding eigenvectors
\[
\begin{bmatrix}  0.442698 & 0.896671   \end{bmatrix}^T,\quad
 \begin{bmatrix}  0.992469 & -0.122499    \end{bmatrix}^T.
\] 
%%%%%%%%
As expected, the eiegnvalues are real and distinct,
and the eigenvectors have the specified sign pattern.~\hfill{$\square$}
%%%%%%%
\end{Example}

We now turn to consider the case where the nonlinear system is time-varying, and~$T$-periodic with a minimal period~$T>0$. 
For~$x_0 \in \Omega$, let
\[
H(t,x_0):=\frac{\partial}{\partial x_0 }x(t,x_0),
\]
that is, $H$ maps a change 
in the initial condition at time~$0$ to the change in 
the solution at time~$t$. Then
$
H(0,x_0)=I,
$
and
\begin{align}\label{eq:potdd}
 \dot H(t,x_0)&:=\frac{d}{dt} H(t,x_0) \nonumber\\
 &= \frac{\partial}{\partial x_0 }f(t,x(t,x_0))\nonumber\\
 &=J(t,x(t,x_0)) H(t,x_0).
\end{align}
 Since~$J \in M^+$, this is a linear time-varying TPDS. Let~$a:=
 \gamma'(0) $,
 where~$\gamma(t)$ is a~$T$-periodic solution  of~\eqref{eq:tpds}. Then for~$x_0=a$ the linear TPDS~\eqref{eq:potdd} is also~$T$-periodic, so~$ \Phi(t): = H(t,a) $ satisfies
  $
  \Phi(t+T)=\Phi(t)B,
  $
with~$B:=\Phi(T)$. We conclude that
\[
\frac{\partial} {\partial x_0} 
x(kT,a)=B^k .
\]

For~$\varepsilon>0$ 
and~$\omega \in \R^n$,  let
$
z(t):=x(t,a+\varepsilon \omega)-x(t,a).
$
Then for a fixed time~$t\geq 0$,
$
z(t)= \varepsilon \Phi(t) \omega +o(\varepsilon),
$
so for a fixed~$k$,
\begin{align*}
    z(k T)&= \varepsilon \Phi(k T) \omega +o(\varepsilon) = \varepsilon B^k \omega +o(\varepsilon).
\end{align*}
%%%%%
Thus, as in the cases described above,   $B$ always has $1$ as an eigenvalue and a corresponding eigenvector~$a$. The eigenvalues and eigenvectors of the TP matrix~$B$ determine the response to a small perturbation~$a\to a+\varepsilon \omega $.

%%%% 

The special  sign pattern of the eigenvectors~$v^1$ and~$v^n$ of~$B$  
can be explained intuitively using the cooperative and tridiagonal 
structure of the Jacobian  of a~TPDS. 
We demonstrate this using a model from systems biology called the ribosome flow model~(RFM). 

\subsection{Application: the RFM}
%%%%%%
The RFM  
is a phenomenological  model for the flow of ``material'' along a~1D chain composed of~$n$ consecutive sites. 
The RFM is the dynamic mean field approximation of a 
fundamental  model from
statistical mechanics called the \emph{totally asymmetric simple exclusion process}~(TASEP)~\cite{solvers_guide,Kriecherbauer_2010}. This model describes the stochastic motion of  
particles hopping along  a~1D  chain of sites while restricted
   by the simple exclusion principle, namely,  two particles
cannot occupy the same site at the same time.

The RFM, and more generally networks of interconnected~RFMs, has been extensively used   to model
the flow of ribosomes along the mRNA during translation~\cite{rfm_concave,RFM_feedback,RFM_stability,RFMR,RFMEO,RFM_model_compete_J,rfmnets,rfmdr,mexdenrfm,RFM_RANDOM}, and  also
other important processes such as   the transfer of 
the phosphoryl group from the sensor kinases to the ultimate target during phosphorelay~\cite{EYAL_RFMD1}.

  The RFM  is a set of~$n$ ODEs:
%%%%%%%%%%%%%%%%%%%%%%%%
\begin{equation}\label{rfm}
\dot x_i(t)= \lambda_{i-1}x_{i-1}(t)(1-x_{i}(t))-\lambda_{i}x_{i}(t)(1-x_{i+1}(t)), 
\end{equation}
with $i=1,\dots,n$, $x_{0}(t)\equiv 1$, and $x_{n+1}(t)\equiv 0$.
The state-variable~$x_i(t)$ represents the density at site~$i$ at time~$t$, normalized such that~$x_i(t)=0$
[$x_i(t)=1$] corresponds to site~$i$ being empty [full] at time~$t$. 
The positive parameter~$\lambda_{i}$ represents the transition rate from site~$i$ to site~$i+1$. 
In particular, $\lambda_0$ [$\lambda_n$] 
is called the initiation [exit] rate, and the other~$\lambda_i$s are the elongation rates. 
 
To explain~\eqref{rfm}, consider the equation for~$\dot x_2$, namely,
\[
\dot x_2 =\lambda_{1}x_{1} (1-x_{2} )-\lambda_{2}x_{2} (1-x_{3} ).
\]
This asserts that the change in the density at site~$2$ 
is the flow from site~$1$ to site~$2$ 
minus the flow from site~$2$ to site~$3$. 
The flow from site~$1$ to site~$2$, given by~$\lambda_{1}x_{1} (1-x_{2} )$,
is proportional to the density at site~$1$, the amount of ``free space''~$(1-x_2 )$
at site~$2$, and the transition rate~$\lambda_{1}$ from site~$1$ to site~$2$. 
In particular, as the density in site~$2$ increases, the flow from site~$1$ to site~$2$
decreases. This is a ``soft version'' of the simple exclusion principle in~TASEP.

An important property of the RFM, ``inherited'' from~TASEP,
 is that allows to study the generation of ``traffic jams'' along the chain.
Indeed, if some~$\lambda_i$ is very small w.r.t. the other rates then 
the exit rate from site~$i$  will be small. Then the density at site~$i$ will increase, and consequently, the transition rate from site~$i-1$  will decrease. In this way, a traffic jam of
high density sites will evolve ``behind'' site~$i$. 
The dynamic evolution and  implications of   traffic jams of ``biological particles'' like ants, ribosomes, and  molecular motors   
 are  attracting  considerable interest (see, e.g.~\cite{Ross5911,tuller_traffic_jams2018,neurojams,ants_jams}).

The output flow from the last site is~$R(t):=\lambda_n x_n(t)$. 
This represents the rate at which ribosomes exit the mRNA, i.e. the protein production rate.

The state space of RFM is the unit
cube $ [0,1]^n$. For~$a \in[0,1]^n$, let~$x(t,a)$ denote the solution  of the~RFM
at time~$t$  with~$x(0)=a$. The flow possesses a unique  globally
 stable~(GAS) equilibrium~$e\in  ( 0,1) ^n$~\cite{RFM_stability,cast_book}, that is,
$
\lim_{t \to\infty }
x(t,a)=e$,   for all~$a\in[0,1]^n$.
This steady-state represents a set of densities~$e_1,\dots,e_n$ for which 
  the flow into each site is equal to the flow out of this site.
	In  physics, this is sometimes referred to as a nonequilibrium stationary state.	
	
	Let~$R:=\lambda_n e_n$. 
This	is the  flow 
  of ribosomes out of the chain, and thus the protein production rate,
at equilibrium.
It follows from~\eqref{rfm} that
\begin{equation}\label{steady_rfm}
\lambda_{i}e_{i}(1-e_{i+1})=R,\quad i=0,\dots,n, 
\end{equation}
where~$e_0:=1$ and~$e_{n+1} :=0$.

Let~$J(x)$ denote the Jacobian of the vector field in the~RFM.
Then~$J(x)$ admits the tridiagonal structure~$J(x)=T(a_i(x),b_i(x),c_i(x))$ in~\eqref{eq:defT} 
with
$
a_i(x)=-\lambda_{i-1} x_{i-1} -\lambda_i(1-x_{i+1})$, $
b_i(x) =  \lambda_i x_i $, and $  
c_i(x)  =\lambda_i (1-x_{i+1}) $, 
where~$x_0:=1$, and~$x_{n+1}:=0$. Thus,  the RFM is a~TPDS on the invariant set~$ (0,1)^n$, and
  Theorem~\ref{thm:jspectpds} implies that  the eigenvalues of~$J(e)$ are 
real and simple and the corresponding eigenvectors satisfy the sign pattern~\eqref{eq:speiv}.
Furthermore, since the~RFM is contractive~\cite{cast_book},  
the eigenvalues of~$J(e)$ are all 
real, simple, and negative. 

 The sign pattern of the eigenvectors corresponding to the slowest [fastest] eigenvalue~$\alpha_1(e)$ 
[$\alpha_n(e)$] 
  can be explained as follows. Recall that~$v^1(e)$
has zero sign variations. We may assume that~$v^1_i(e)>0$ for all~$i$. Then $e\to e+\varepsilon v^1(e)$ is difficult to compensate  because it increases all the densities in the chain. This corresponds a to very congested situation, and returning to the steady-state~$e$ takes more time.  

On the other-hand, a perturbation in the form~$e\to e+\varepsilon v^{n}(e)$ is easier to compensate  because~$v^{n}(e)$ has an alternating sign pattern, that is, we may assume that~$v^{n}_i(e)>0$  for~$i$ odd, and
$v^{n}_i(e)<0$ for~$i$ even.  This represents  a positive addition to~$e_1$, a negative addition to~$e_2$, a positive addition to~$e_3$, and so on. 
The~RFM dynamics compensates for this variation quite naturally: the additional density in an odd site   generates an increased flow to the consecutive site and in this way also ``automatically'' takes care of 
the lower  densities in the even sites.

The next example considers specific values for the rates for which~$e$ (and thus~$J(e)$)
 is known explicitly  for any~$n$.
\begin{Example}\label{exa:mavp}
%%%%%
Consider the RFM with~$\lambda_0=\lambda_n=1/2$,
 and~$\lambda_i=1$ for~$i=1,\dots,n-1$. 
It can be shown using the spectral representation of~$e$~\cite{rfm_concave} 
  that in this case~$e_i=1/2$, $i=1,\dots,n$. This implies that~$J(e)=T(a_i(e),b_i(e),c_i(e))$ 
 is an~$n\times n $ Toeplitz tridiagonal  matrix with~$-1$ on the main diagonal, and~$1/2$ on the sub- and super-diagonals. 
It is well-known~\cite{Yueh2005EIGENVALUESOS} that the   eigenvalues of such a matrix  
are
\[
\alpha_k=-1+\cos(k\pi /(n+1)) , \quad k=1,\dots,n,
\] 
and the eigenvector  corresponding to~$\alpha_k $ is
$
%%%%%%
v^k=\begin{bmatrix}  \sin( \frac{k \pi}{n+1} ) & \sin(\frac{2k   \pi}{n+1} ) &\dots &\sin( \frac{n k   \pi}{n+1} )   \end{bmatrix}^T 
$.
In particular,
\[
%%%%%%
v^1:=\begin{bmatrix}  \sin( \frac{ \pi}{n+1} ) & \sin(\frac{2   \pi}{n+1} ) &\dots &\sin( \frac{n \pi}{n+1} )   \end{bmatrix}^T. 
\]
and
\[
%%%%%%
v^n:=\begin{bmatrix}  \sin( \frac{n \pi}{n+1} ) & \sin(\frac{2n   \pi}{n+1} ) &\dots &\sin( \frac{n^2   \pi}{n+1} )   \end{bmatrix}^T. 
\]

As expected, all the eigenvalues are real, negative and simple,   all the entries of~$v^1$ are positive,
and the entries of~$v^n$ have the sign pattern~$(+,-,+,-,\dots)$. 
The maximal eigenvalue is
$
\alpha_1=-1+\cos(\pi /(n+1)) .
$ Thus,  
\begin{align}\label{eq:asym}
%%%%%%%%%%%%%%%%%%%%%%%%%%
\lim_{n\to \infty} \frac{\log(-\alpha_1(n))}{\log(n)} &=
%%%%%%%%
\lim_{n\to \infty} \frac{\log(  
\frac{\pi^2}{2(n+1)^2}-\frac{\pi^4}{24(n+1)^4}+\dots )}{\log(n)}\nonumber  \\
&=-2.
\end{align}
%%%
Thus, the relaxation time of the system   behaves like~$n^2$ for large~$n$.

The minimal  eigenvalue is
$
\alpha_n=-1+\cos(\pi n /(n+1)) ,
$
so
\begin{align*}
\lim_{n\to \infty} \alpha_n   &=-2
%%%%%%%%
\end{align*}
%%%
Thus, the fastest convergence rate to~$e$, corresponding to a perturbation in the form~$e+\varepsilon v^{n }$,  is independent of~$n$ for large~$n$.~\hfill{$\square$}
%%%%%
\end{Example}

\begin{Remark}
%%%%%%%%%%%%%%%%%%%%%%%%%
The asymptotic behavior described in~\eqref{eq:asym}
 seems to hold  for  other rates as well. For example,
Fig.~\ref{fig:maxee} depicts $ \log(-\alpha_1(n))$ as a function of~$\log(n) $ 
 for the case where~$\lambda_i=1$ for~$i\in\{0,\dots,n\}$.
It may be seen that for large values of~$n$ the graph behaves like a line with slope~$-2$.
\end{Remark}

\begin{figure}[t]
 \begin{center}
   \includegraphics[scale=0.4]{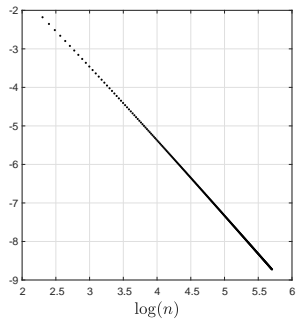}
\caption{ $ \log(-\alpha_1(n)) $ as a function of~$\log(n)$.  }\label{fig:maxee}
%%%%%%%%%%%%%%%%%%%%%%%%%%
\end{center}
\end{figure}

\section{Conclusion}
%%%%%%%%%%%%%%%%%%%%%%%%%%%%%%%%%%%
Any bounded solution
of a~$T$-periodic TPDS converges to a~$T$-periodic solution of the TPDS. 
If the~$T$-periodic vector field represents  a $T$-periodic excitation then this implies that the dynamical system entrains to the excitation. 
In particular, any bounded solution of  
a time-invariant  TPDS converges to an equilibrium~\cite{fulltppaper}.

It is important to
understand how a perturbation  from a  periodic solution affects the dynamics and, in particular, what  are the convergence or divergence rates associated with different perturbation directions.
In general, Floquet theory  does not provide explicit answers to theses questions.  
We used the spectral theory of TP matrices
to analyze this question for TPDSs and showed that the ``best'' and ``worst'' perturbation directions have special sign patterns that admit an intuitive interpretation.

An interesting topic for future research 
is to use this information to design appropriate 
control algorithms for~TPDSs.

%\bibliographystyle{IEEEtranS}
%\bibliography{rfm_near_eq_bib}

\begin{thebibliography}{10}
\providecommand{\url}[1]{#1}
\csname url@samestyle\endcsname
\providecommand{\newblock}{\relax}
\providecommand{\bibinfo}[2]{#2}
\providecommand{\BIBentrySTDinterwordspacing}{\spaceskip=0pt\relax}
\providecommand{\BIBentryALTinterwordstretchfactor}{4}
\providecommand{\BIBentryALTinterwordspacing}{\spaceskip=\fontdimen2\font plus
\BIBentryALTinterwordstretchfactor\fontdimen3\font minus
  \fontdimen4\font\relax}
\providecommand{\BIBforeignlanguage}[2]{{%
\expandafter\ifx\csname l@#1\endcsname\relax
\typeout{** WARNING: IEEEtranS.bst: No hyphenation pattern has been}%
\typeout{** loaded for the language `#1'. Using the pattern for}%
\typeout{** the default language instead.}%
\else
\language=\csname l@#1\endcsname
\fi
#2}}
\providecommand{\BIBdecl}{\relax}
\BIBdecl

\bibitem{EYAL_RFMD1}
E.~Bar-Shalom, A.~Ovseevich, and M.~Margaliot, ``Ribosome flow model with
  different site sizes,'' \emph{{SIAM} J. Applied Dynamical Systems}, vol.~19,
  pp. 541--576, 2020.

\bibitem{solvers_guide}
R.~A. Blythe and M.~R. Evans, ``Nonequilibrium steady states of matrix-product
  form: a solver's guide,'' \emph{J. Phys. A: Math. Gen.}, vol.~40, no.~46, pp.
  R333--R441, 2007.

\bibitem{chicone_2006}
C.~Chicone, \emph{Ordinary Differential Equations with Applications},
  2nd~ed.\hskip 1em plus 0.5em minus 0.4em\relax New York: Springer, 2006.

\bibitem{chua_roska_1990}
L.~O. Chua and T.~Roska, ``Stability of a class of nonreciprocal cellular
  neural networks,'' \emph{IEEE Trans. Circuits and Systems}, vol.~37, no.~12,
  pp. 1520--1527, 1990.

\bibitem{tuller_traffic_jams2018}
A.~Diament, A.~Feldman, E.~Schochet, M.~Kupiec, Y.~Arava, and T.~Tuller, ``The
  extent of ribosome queuing in budding yeast,'' \emph{PLOS Computational
  Biology}, vol.~14, pp. 1--21, 2018.

\bibitem{Donnell2009120}
P.~Donnell, S.~A. Baigent, and M.~Banaji, ``Monotone dynamics of two cells
  dynamically coupled by a voltage-dependent gap junction,'' \emph{J.
  Theoretical Biology}, vol. 261, no.~1, pp. 120--125, 2009.

\bibitem{ants_jams}
A.~Dussutour, J.-L. Deneubourg, and V.~Fourcassie, ``Temporal organization of
  bi-directional traffic in the ant {Lasius} niger {(L.)},'' \emph{J. Exp.
  Biol.}, vol. 208, pp. 2903--2912, 2005.

\bibitem{total_book}
S.~M. Fallat and C.~R. Johnson, \emph{Totally Nonnegative Matrices}.\hskip 1em
  plus 0.5em minus 0.4em\relax Princeton, NJ: Princeton University Press, 2011.

\bibitem{fgwang2013}
C.~Fang, M.~Gyllenberg, and Y.~Wang, ``Floquet bundles for tridiagonal
  competitive-cooperative systems and the dynamics of time-recurrent systems,''
  \emph{SIAM J. Math. Anal.}, vol.~45, no.~4, pp. 2477--2498, 2013.

\bibitem{gk_book}
F.~R. Gantmacher and M.~G. Krein, \emph{Oscillation Matrices and Kernels and
  Small Vibrations of Mechanical Systems}.\hskip 1em plus 0.5em minus
  0.4em\relax Providence, RI: American Mathematical Society, 2002, translation
  based on the~1941 {Russian} original.

\bibitem{Kriecherbauer_2010}
T.~Kriecherbauer and J.~Krug, ``A pedestrian`s view on interacting particle
  systems, {KPZ} universality and random matrices,'' \emph{Journal of Physics
  A: Mathematical and Theoretical}, vol.~43, no.~40, p. 403001, 2010.

\bibitem{fulltppaper}
M.~Margaliot and E.~D. Sontag, ``Revisiting totally positive differential
  systems: A tutorial and new results,'' \emph{Automatica}, vol. 101, pp.
  1--14, 2019.

\bibitem{RFM_stability}
M.~Margaliot and T.~Tuller, ``Stability analysis of the ribosome flow model,''
  \emph{IEEE/ACM Trans. Comput. Biol. Bioinf.}, vol.~9, pp. 1545--1552, 2012.

\bibitem{RFM_feedback}
------, ``Ribosome flow model with positive feedback,'' \emph{J. Royal Society
  Interface}, vol.~10, p. 20130267, 2013.

\bibitem{RFM_RANDOM}
M.~Margaliot, W.~Huleihel, and T.~Tuller, ``Variability in {mRNA} translation:
  A random matrix theory approach,'' \emph{Scientific Reports}, 2020, to
  appear.

\bibitem{cast_book}
M.~Margaliot, T.~Tuller, and E.~D. Sontag, ``Checkable conditions for
  contraction after small transients in time and amplitude,'' in \emph{Feedback
  Stabilization of Controlled Dynamical Systems: In Honor of {Laurent}
  {Praly}}, N.~Petit, Ed.\hskip 1em plus 0.5em minus 0.4em\relax Cham,
  Switzerland: Springer International Publishing, 2017, pp. 279--305.

\bibitem{rfmnets}
I.~Nanikashvili, Y.~Zarai, A.~Ovseevich, T.~Tuller, and M.~Margaliot,
  ``Networks of ribosome flow models for modeling and analyzing intracellular
  traffic,'' \emph{Scientific Reports}, vol.~9, p. 1703, 2019.

\bibitem{pinkus}
A.~Pinkus, \emph{Totally Positive Matrices}.\hskip 1em plus 0.5em minus
  0.4em\relax Cambridge, UK: Cambridge University Press, 2010.

\bibitem{rfm_concave}
G.~Poker, Y.~Zarai, M.~Margaliot, and T.~Tuller, ``Maximizing protein
  translation rate in the nonhomogeneous ribosome flow model: a convex
  optimization approach,'' \emph{J. Royal Society Interface}, vol.~11, no. 100,
  2014.

\bibitem{RFMR}
A.~Raveh, Y.~Zarai, M.~Margaliot, and T.~Tuller, ``Ribosome flow model on a
  ring,'' \emph{IEEE/ACM Trans. Comput. Biol. Bioinf.}, vol.~12, no.~6, pp.
  1429--1439, 2015.

\bibitem{RFM_model_compete_J}
A.~Raveh, M.~Margaliot, E.~D. Sontag, and T.~Tuller, ``A model for competition
  for ribosomes in the cell,'' \emph{J. Royal Society Interface}, vol.~13, no.
  116, 2016.

\bibitem{Ross5911}
J.~L. Ross, ``The impacts of molecular motor traffic jams,'' \emph{Proceedings
  of the National Academy of Sciences}, vol. 109, no.~16, pp. 5911--5912, 2012.

\bibitem{schwarz1970}
B.~Schwarz, ``Totally positive differential systems,'' \emph{Pacific J. Math.},
  vol.~32, no.~1, pp. 203--229, 1970.

\bibitem{neurojams}
S.~A. Small, S.~Simoes-Spassov, R.~Mayeux, and G.~A. Petsko, ``Endosomal
  traffic jams represent a pathogenic hub and therapeutic target in
  {A}lzheimer's disease,'' \emph{Trends Neurosci.}, vol.~40, pp. 592--602,
  2017.

\bibitem{smillie}
J.~Smillie, ``Competitive and cooperative tridiagonal systems of differential
  equations,'' \emph{SIAM J. Math. Anal.}, vol.~15, pp. 530--534, 1984.

\bibitem{periodic_tridi_smith}
H.~L. Smith, ``Periodic tridiagonal competitive and cooperative systems of
  differential equations,'' \emph{SIAM J. Math. Anal.}, vol.~22, no.~4, pp.
  1102--1109, 1991.

\bibitem{hlsmith}
------, \emph{Monotone Dynamical Systems: An Introduction to the Theory of
  Competitive and Cooperative Systems}, ser. Mathematical Surveys and
  Monographs.\hskip 1em plus 0.5em minus 0.4em\relax Providence, RI: Amer.
  Math. Soc., 1995, vol.~41.

\bibitem{weiss-cdc57}
E.~Weiss and M.~Margaliot, ``A generalization of {Smillie's} theorem on
  strongly cooperative tridiagonal systems,'' in \emph{{Proc.\ 57th IEEE Conf.
  on Decision and Control}}, Miami Beach, FL, 2018, to appear.

\bibitem{Yueh2005EIGENVALUESOS}
W.-C. Yueh, ``Eigenvalues of several tridiagonal matrices,'' \emph{Applied
  Mathematics E-Notes}, vol.~5, pp. 66--74, 2005.

\bibitem{exp_osci}
Y.~Zarai and M.~Margaliot, ``On the exponent of several classes of oscillatory
  matrices,'' \emph{Linear Algebra Appl.}, vol. 608, pp. 363--386, 2021.

\bibitem{rfmdr}
Y.~Zarai, M.~Margaliot, and T.~Tuller, ``Optimal down regulation of {mRNA}
  translation,'' \emph{Scientific Reports}, vol.~7, p. 41243, 2017.

\bibitem{rfm_chap}
------, ``Modeling and analyzing the flow of molecular machines in gene
  expression,'' in \emph{Systems Biology}, N.~Rajewsky, S.~Jurga, and
  J.~Barciszewski, Eds.\hskip 1em plus 0.5em minus 0.4em\relax Springer, 2018,
  pp. 275--300.

\bibitem{mexdenrfm}
\BIBentryALTinterwordspacing
------, ``On the ribosomal density that maximizes protein translation rate,''
  \emph{PLOS ONE}, vol.~11, no.~11, pp. 1--26, 2016. [Online]. Available:
  \url{https://doi.org/10.1371/journal.pone.0166481}
\BIBentrySTDinterwordspacing

\bibitem{RFMEO}
\BIBentryALTinterwordspacing
------, ``The ribosome flow model with extended objects,'' \emph{J. R. Soc.
  Interface}, p. 1420170128, 2017. [Online]. Available:
  \url{http://doi.org/10.1098/rsif.2017.0128}
\BIBentrySTDinterwordspacing

\end{thebibliography}
% Generated by IEEEtranS.bst, version: 1.14 (2015/08/26)

\end{document}